\newtheorem{theorem}{Theorem}[section]
\newtheorem{lemma}[theorem]{Lemma}
\newtheorem{corollary}[theorem]{Corollary}
\newtheorem{proposition}[theorem]{Proposition}
\theoremstyle{definition}
\newtheorem{example}[theorem]{Example}
\theoremstyle{remark}
\numberwithin{equation}{section}
\begin{document}

\title{integral of distance function on compact Riemannian manifolds}

\author{}
\address{}
\curraddr{}
\email{}
\thanks{}

\author{Jianming Wan}
\address{School of Mathematics, Northwest University, Xi'an 710127, China}
\email{wanj\_m@aliyun.com}
\thanks{The author was supported by National Natural Science Foundation of China
N0.11301416.}

\subjclass[2010]{Primary 53C20; Secondary 53C22}

\date{April 26, 2016}

\dedicatory{}

\keywords{}

\begin{abstract}\normalsize
In this paper we show that, under some curvature assumptions the integral of distance function on a compact Riemannian manifold is bounded below by the product of diameter, volume and a constant
only depending on the dimension.
\end{abstract}
\maketitle




\section{introduction}
Let $M$ be a compact Riemannian manifold. Let $d(p, q)$ be the distance between points $p$ and $q$. If we fix a point $p\in M$, then we obtain a distance function $d(p,x), x\in M$. It is a continuous function and differentiable almost everywhere. The distance function plays a basic role in Riemannian geometry. In this paper we consider the integral of $d(p,x)$ on $M$. This gives a function
\begin{equation}
f(p)=\int_{M}d(p, x)dv, p\in M.
\end{equation}
Obviously it has an upper bound $d(M)V(M)$. Here $d(M)$ denotes the diameter of $M$ and $V(M)$ is the volume of $M$.

By the mean value theorem, for every $p\in M$ we can find a point $\xi_{p}\in M$ such that $$f(p)=d(p,\xi_{p})V(M).$$
So we can ask a natural question: For any compact Riemannian manifold $M$ of dimension $n$, do we have
\begin{equation}
f(p)\geq c(n)d(M)V(M)
\end{equation}
for all $p\in M$? The $c(n)$ is a positive constant only depending on the dimension $n$. Unfortunately, the answer is negative. In fact we can construct examples such that $\frac{f(p)}{d(M)V(M)}$ is arbitrarily small for some point $p$ (example 2.4). But if we add some curvature conditions, the answer is positive. The first result of this paper is

\begin{theorem}
Let $M$ be an $n$-dimensional compact Riemannian manifold with nonnegatively Ricci curvature. Then $$f(p)>c(n)d(M)V(M)$$
for all $p\in M$. The $c(n)$ can be chosen to equal $(1-\frac{1}{n+1})^{n}\cdot\frac{1}{2^{n+1}(n+1)}$.
\end{theorem}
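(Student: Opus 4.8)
The plan is to exploit the nonnegative Ricci curvature through the Bishop--Gromov volume comparison theorem, which asserts that for such $M$ the ratio $V(B(a,r))/r^{n}$ is non-increasing in $r$ for every center $a$. Since the diameter is attained on a compact manifold, I would first fix points $a,b\in M$ with $d(a,b)=d(M)=:D$. For the given $p$, the triangle inequality $D=d(a,b)\le d(a,p)+d(p,b)$ forces $\max\{d(a,p),d(p,b)\}\ge D/2$, so after relabelling I may assume $d(p,a)\ge D/2$; the role of $a$ is to supply a point guaranteed to be far from $p$.

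Next I would discard most of $M$ and keep only a small geodesic ball around $a$. For any radius $\rho\le D/2$ and any $x\in B(a,\rho)$, the triangle inequality gives $d(p,x)\ge d(p,a)-d(a,x)\ge \frac{D}{2}-\rho$, whence
$$f(p)\ \ge\ \int_{B(a,\rho)}d(p,x)\,dv\ \ge\ \left(\frac{D}{2}-\rho\right)V(B(a,\rho)).$$
To convert the local volume $V(B(a,\rho))$ into a definite fraction of the total volume I would invoke Bishop--Gromov: since every point lies within distance $D$ of $a$ we have $V(B(a,D))=V(M)$ (the sphere of maximal radius is null), and monotonicity of $V(B(a,r))/r^{n}$ then yields
$$\frac{V(B(a,\rho))}{\rho^{n}}\ \ge\ \frac{V(B(a,D))}{D^{n}}\ =\ \frac{V(M)}{D^{n}},$$
so that $V(B(a,\rho))\ge (\rho/D)^{n}V(M)$. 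Combining gives $f(p)\ge \left(\frac{D}{2}-\rho\right)\frac{\rho^{n}}{D^{n}}V(M)$.

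Finally I would optimize the free parameter $\rho$. Maximizing $\left(\frac{D}{2}-\rho\right)\rho^{n}$ on $(0,D/2)$ produces the critical point $\rho=\frac{n}{n+1}\cdot\frac{D}{2}$, at which the bound becomes exactly $\left(1-\frac{1}{n+1}\right)^{n}\frac{1}{2^{n+1}(n+1)}\,D\,V(M)$, matching the asserted $c(n)$. Strictness then comes for free: the part $\int_{M\setminus B(a,\rho)}d(p,x)\,dv$ that was thrown away is strictly positive, since $d(p,\cdot)>0$ off a single point and $M\setminus B(a,\rho)$ has positive measure (it contains a neighborhood of $b$, which is at distance $D>\rho$ from $a$).

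I expect the only genuine content to be the volume-comparison step; the remainder is the triangle inequality and a one-variable optimization. Hence the main obstacle is simply applying Bishop--Gromov in the correct direction and verifying $V(B(a,D))=V(M)$. I note that a sharper layer-cake estimate, replacing $\left(\frac{D}{2}-\rho\right)V(B(a,\rho))$ by $\int_{0}^{s}V(B(a,r))\,dr$ with $s=d(p,a)$, would even remove the factor $\left(1-\frac{1}{n+1}\right)^{n}$, but the single-ball version already yields the stated constant.
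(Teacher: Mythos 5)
Your proof is correct and follows essentially the same route as the paper: locate a point at distance at least $\frac{1}{2}d(M)$ from $p$ (the paper's Lemma 3.1), lower-bound the volume of a small ball around that far point by $\frac{\rho^{n}}{D^{n}}V(M)$ via Bishop--Gromov, multiply by the triangle-inequality distance bound, and optimize the radius, arriving at the identical function $(\frac{D}{2}-\rho)\frac{\rho^{n}}{D^{n}}$ and the same constant. The only cosmetic difference is that you integrate directly over the ball around the far point, whereas the paper integrates over the complement of a ball around $p$ and then shrinks that complement to the same far ball; your closing layer-cake remark yielding the improved constant $\frac{1}{2^{n+1}(n+1)}$ is also correct.
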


A natural problem is to determine the sharp value for $c(n)$. Examples in section 2 show that $c(n)$ can achieve $\frac{1}{2}$.

 The Bishop-Gromov's volume comparison plays an essential role in the proof of the theorem.

 In last section we will prove two similar results on complete noncompact Riemannian manifolds.

\section{examples and properties}
Following the triangle inequality, one obviously has
\begin{itemize}
\item $f(p)+f(q)\geq d(p,q)V(M)$,
\item $|f(p)-f(q)|\leq d(p,q)V(M)$.
\end{itemize}
Let $p, q\in M$ satisfy $d(p,q)=d(M)$. One can see that either $f(p)$ or $f(q)$ is greater than or equal to $\frac{1}{2}d(M)V(M)$. So $$\max_{y\in M} f(y)\geq \frac{1}{2}d(M)V(M).$$
We present three examples such that $f(y) \geq \frac{1}{2}d(M)V(M)$ holds for all $y\in M$.

\begin{example}
Let $\gamma$ be a closed curve with length $l$. $d(\gamma)=\frac{l}{2}$ and $V(\gamma)=l$. Then $f=\frac{1}{2}d(\gamma)V(\gamma)$. In fact, if we assume that $\gamma(t)$ has arc length parameter, then
$$f=\int_{0}^{l/2}tdt+\int_{l/2}^{l}(l-t)dt=\frac{l^{2}}{4}=\frac{1}{2}d(\gamma)V(\gamma).$$
\end{example}

\begin{example}
If $M$ is a compact symmetric space, then for any points $p,q\in M$ there exists an isometric mapping $p$ to $q$. Hence $f$ is a constant. Choose $p,q$ such that $d(p, q)=d(M)$. Then $2f=f(p)+f(q)\geq d(p,q)d(M)$.  So we have $$f\geq \frac{1}{2}d(M)V(M).$$  For a special case when $M$ is sphere space form $S^{n}_{k}$ ($k$ is the sectional curvature), let $p\in S^{n}_{k}$ and $q$ is the antipodal point of $p$. Then for any $x\in S^{n}_{k}$,
$d(p, x)+d(q,x)=d(S^{n}_{k})$. Hence we have $$f=\frac{1}{2}d(S^{n}_{k})V(S^{n}_{k}).$$
\end{example}

\begin{example}
Let $T^{2}$ be the flat 2-torus of area 1. Then $$f=\int_{0}^{\frac{1}{2}}r\cdot 2\pi rdr+4\int_{\frac{1}{2}}^{\frac{\sqrt{2}}{2}}r\cdot(\frac{\pi}{2}-2\arccos\frac{1}{2r})rdr$$ $$=\frac{1}{6}(\sqrt{2}+\ln(\sqrt{2}+1))\doteq 0.3826.$$
$f>\frac{1}{2}diam(T^{2})V(T^{2})=\frac{\sqrt{2}}{4}\doteq0.3535$.
\end{example}

However the following example shows that $\frac{f(y)}{d(M)V(M)}$ can achieves every value in $(0,1)$.

\begin{example}
Let $M_{1}=\{(x,y,z)|x^{2}+y^{2}+z^{2}=1, -1+\epsilon\leq z \leq1\}$ and $M_{2}=\{(x,y,z)|x^{2}+y^{2}=2\epsilon-\epsilon^{2}, -1+\varepsilon-L \leq z\leq-1+\varepsilon\}\cup \{(x,y,z)|x^{2}+y^{2}\leq2\epsilon-\epsilon^{2},z=-1+\varepsilon-L\}$. Let $M=M_{1}\cup M_{2}$ be $M_{1}$ glued to $M_{2}$.
We write $C=2\pi \sqrt{2\epsilon-\epsilon^{2}}$. Let $p=(0,0,1)$ and $q=(0,0,-1+\varepsilon-L)$.
When $C$ is very small,
\begin{eqnarray*}
f(p)& \doteq & \int_{S^{2}_{1}}d(p,x)dS+\int^{L}_{0}(\pi+t)Cdt\\
& = & \frac{1}{2}\pi V(S^{2}_{1})+C(\pi L+\frac{L^{2}}{2})
\end{eqnarray*}
and
$$d(M)V(M)\doteq(L+\pi)(V(S^{2}_{1})+LC).$$
Set $C=\frac{1}{L^{3}}$. We can see that $\frac{f(p)}{d(M)V(M)}\rightarrow 0$ as $L\rightarrow \infty$. We also have  $\frac{f(q)}{d(M)V(M)}\rightarrow 1$ as $L\rightarrow \infty$.
\end{example}

The following proposition is a consequence of Bishop-Gromov volume comparison.
\begin{proposition}
If $Ric_{M}\geq (n-1)k>0$, then $f\leq \frac{1}{2}d(S^{n}_{k})V(S^{n}_{k})$. The equality holds if and only if $M$ is isometric to $S^{n}_{k}$.
\end{proposition}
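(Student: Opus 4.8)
The plan is to rewrite $f(p)$ through the layer-cake (coarea) formula and then feed in the Bishop--Gromov comparison. Writing $R_p=\max_{x\in M}d(p,x)$ and $B_p(t)$ for the metric ball, compactness gives $V(B_p(R_p))=V(M)$ and
$$f(p)=\int_0^{R_p}\bigl(V(M)-V(B_p(t))\bigr)\,dt.$$
Let $V_k(t)$ denote the volume of a ball of radius $t$ in the model space $S^n_k$, fix any point $o\in S^n_k$, and set $I(R)=\int_0^R V_k(t)\,dt$. The strategy is to bound $V(B_p(t))$ from below inside this integral and to replace the unknown $V(M)$ by a model volume, reducing everything to an explicit monotone model quantity.

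First I would apply the \emph{relative} Bishop--Gromov inequality: since $Ric_M\ge (n-1)k$, the ratio $V(B_p(t))/V_k(t)$ is non-increasing, so for $t\le R_p$ we get $V(B_p(t))\ge \frac{V(M)}{V_k(R_p)}V_k(t)$. Substituting into the layer-cake expression yields
$$f(p)\le V(M)\Bigl(R_p-\frac{I(R_p)}{V_k(R_p)}\Bigr).$$
Next I would invoke the \emph{absolute} Bishop inequality $V(M)=V(B_p(R_p))\le V_k(R_p)$, together with the elementary fact that the parenthesized factor is nonnegative (because $V_k$ is increasing, $I(R_p)\le R_p V_k(R_p)$), to lower $V(M)$ to $V_k(R_p)$:
$$f(p)\le R_pV_k(R_p)-I(R_p)=:\Psi(R_p).$$

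The key observation is that $\Psi(R)=\int_0^R\bigl(V_k(R)-V_k(t)\bigr)\,dt$ is exactly the integral $\int_{B_o(R)}d(o,x)\,dx$ over a ball of radius $R$ in $S^n_k$, and that $\Psi'(R)=R\,V_k'(R)\ge 0$, so $\Psi$ is non-decreasing, and strictly so on $(0,\pi/\sqrt{k})$ since $V_k'(R)$ vanishes only at the endpoints. By Myers' theorem $R_p\le d(M)\le \pi/\sqrt{k}=d(S^n_k)$, whence
$$f(p)\le \Psi\bigl(d(S^n_k)\bigr)=\int_{S^n_k}d(o,x)\,dx=\tfrac12\,d(S^n_k)V(S^n_k),$$
the last equality being Example 2.2. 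This gives the inequality.

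For the rigidity statement, equality forces each step above to be an equality; in particular $\Psi(R_p)=\Psi(d(S^n_k))$ together with the strict monotonicity of $\Psi$ on $(0,\pi/\sqrt{k})$ forces $R_p=\pi/\sqrt{k}$, hence $d(M)=\pi/\sqrt{k}$. Cheng's maximal diameter theorem then gives that $M$ is isometric to $S^n_k$, while the converse is immediate from Example 2.2. I expect the main obstacle to be the correct \emph{simultaneous} use of the two Bishop comparisons---the relative form to control $V(B_p(t))$ from below within the integrand, and the absolute form to exchange $V(M)$ for the model volume---after which the problem collapses to the monotonicity of the explicit quantity $\Psi$, and the equality analysis is essentially Cheng's rigidity.
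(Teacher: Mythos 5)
Your proof is correct, but it takes a genuinely different route from the paper's. The paper slices $f(p)$ by geodesic spheres via the coarea formula, $f(p)=\int_0^{d(M)}\lambda\,V(S_\lambda)\,d\lambda$, applies Bishop's comparison pointwise to the sphere areas, $V(S_\lambda)\le V(S_{k\lambda})$, extends the upper limit of integration to $d(S^n_k)$ by Myers, and recognizes the resulting model integral as $\tfrac{1}{2}d(S^n_k)V(S^n_k)$ (Example 2.2); in the equality case it reads off $d(M)=d(S^n_k)$ and $V(S_\lambda)=V(S_{k\lambda})$ for all $\lambda$ directly, and concludes isometry from this rigidity. You instead slice by superlevel sets (layer-cake), $f(p)=\int_0^{R_p}\bigl(V(M)-V(B_p(t))\bigr)\,dt$, and use ball-volume comparisons twice: the relative Bishop--Gromov ratio monotonicity anchored at $R_p$ to bound $V(B_p(t))$ from below, and the absolute Bishop inequality $V(M)\le V_k(R_p)$ (justified by your nonnegativity check on the parenthesized factor) to pass to the model functional $\Psi$, whose strict monotonicity plus Myers finishes the inequality. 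What your version buys: it uses only the ball-volume forms of comparison, which are the most standard statements, and it avoids sphere areas and the attendant cut-locus bookkeeping in the coarea formula. What it costs: two applications of comparison instead of one, and the rigidity must be outsourced to Cheng's maximal diameter theorem, since you only recover $d(M)=\pi/\sqrt{k}$ rather than the full sphere-area equality the paper obtains along the way. Both arguments are complete, and both reduce the endpoint evaluation to Example 2.2.
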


\begin{proof}
By the Fubini theorem and Bishop-Gromov volume comparison,
\begin{eqnarray*}
f(p)& = & \int_{M}d(p, x)dv = \int_{-\infty}^{+\infty}d\lambda \int_{S_{\lambda}}d(p, x)dv_{\lambda}\\
& = &\int_{0}^{d(M)}d\lambda\int_{S_{\lambda}}\lambda dv_{\lambda}  = \int_{0}^{d(M)}\lambda V(S_{\lambda})d\lambda\\
& \leq &\int_{0}^{d(S^{n}_{k})}\lambda V(S_{k\lambda})d\lambda\\
& = &\frac{1}{2}d(S^{n}_{k})V(S^{n}_{k}).
\end{eqnarray*}
The $S_{\lambda}$ denotes the sphere center at $p$ with radius $\lambda$ and $V(S_{\lambda})$ is the induced volume of $S_{\lambda}$. If the equality holds, we
 have $d(M)=d(S^{n}_{k})$ and $V(S_{\lambda})=V(S_{k\lambda})$. So $M$ must be isometric to $S^{n}_{k}$.
\end{proof}

\section{a proof of theorem 1.1}
Let $B_{p}(r)$ (respectively $B_{o}(r)$ ) denote the ball center at $p$ of radius $r$ in $M$ (respectively ball center at origin of radius $r$ in $\mathbb{R}^{n}$). The $V_{p}(r)$ (respectively $V_{o}(r)$) denotes the volume of $B_{p}(r)$ (respectively $B_{o}(r)$).

\begin{lemma}
For any $p\in M$, we have $\max_{x\in M}d(p,x) \geq \frac{1}{2}d(M)$.
\end{lemma}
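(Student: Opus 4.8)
The plan is to reduce the statement to a single application of the triangle inequality against a diameter-realizing pair of points. Since $M$ is compact, the distance function is continuous on the compact set $M\times M$, so the supremum defining $d(M)$ is attained: there exist points $a,b\in M$ with $d(a,b)=d(M)$. I would fix such a pair at the outset.

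Next, for the given point $p\in M$, I would apply the triangle inequality along $a$, $p$, $b$ to obtain
\[
d(M)=d(a,b)\leq d(a,p)+d(p,b).
\]
From this inequality at least one of the two summands must be at least half of $d(M)$; that is, $\max\{d(p,a),d(p,b)\}\geq \tfrac12 d(M)$. Since both $a$ and $b$ lie in $M$, the quantity $\max_{x\in M}d(p,x)$ dominates $\max\{d(p,a),d(p,b)\}$, and the desired inequality $\max_{x\in M}d(p,x)\geq \tfrac12 d(M)$ follows immediately.

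I expect no genuine obstacle here: the only point requiring a word of care is the existence of the maximizing pair $a,b$ and of the maximizer of $x\mapsto d(p,x)$, both of which are guaranteed by compactness of $M$ and continuity of the distance function. The argument is purely metric and uses nothing about curvature or dimension, which is consistent with its intended role as a preliminary estimate feeding into the proof of Theorem 1.1.
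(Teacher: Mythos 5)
Your proof is correct and is essentially the same as the paper's: both apply the triangle inequality to a diameter-realizing pair $d(q_1,q_2)=d(M)$, the only difference being that you argue directly while the paper phrases it as a contradiction. The extra care you take about the existence of the maximizing pair via compactness is a fine (if routine) addition.
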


\begin{proof}
If on the contrary, $\max_{x\in M}d(p,x) < \frac{1}{2}d(M)$. Choosing $q_{1}, q_{2}\in M$ such that $d(q_{1}, q_{2})=d(M)$, thus
$$d(M)\leq d(p, q_{1})+d(p, q_{2})< \frac{1}{2}d(M)+\frac{1}{2}d(M)=d(M).$$ This is a contradiction.
\end{proof}

Because the Ricci curvature of $M$ is nonnegative. The Bishop-Gromov's volume comparison implies that
$$\frac{V_{p}(r)}{V_{o}(r)}\geq \frac{V_{p}(R)}{V_{o}(R)}$$
for $r\leq R$. Hence $$V_{p}(r) \geq \frac{V_{o}(r)}{V_{o}(R)}V_{p}(R)=\frac{r^{n}}{R^{n}}V_{p}(R).$$
Let $R\rightarrow d=d(M)$. We get $$V_{p}(r) \geq \frac{r^{n}}{d^{n}}V(M)$$ for all $r\leq d$.

By the lemma, for any $p\in M$, we can choose $q\in M$ such that $d(p,q)\geq\frac{1}{2}d(M)$. Thus
\begin{eqnarray*}
f(p)& = & \int_{B_{p}(\frac{1}{2}d-r)}d(p,x)dv+\int_{M\setminus B_{p}(\frac{1}{2}d-r)}d(p,x)dv\\
& > & \int_{M\setminus B_{p}(\frac{1}{2}d-r)}d(p,x)dv\\
& > & (\frac{1}{2}d-r) V(M\setminus B_{p}(\frac{1}{2}d-r))\\
& > & (\frac{1}{2}d-r) V_{q}(r)\\
& \geq & V(M)(\frac{1}{2}d-r)\frac{r^{n}}{d^{n}}.
\end{eqnarray*}
Let $g(r)=(\frac{1}{2}d-r)\frac{r^{n}}{d^{n}}, 0\leq r\leq \frac{1}{2}d$. When $$g^{'}(r)=\frac{1}{d^{n}}[\frac{nd}{2}r^{n-1}-(n+1)r^{n}]=0,$$
$r=\frac{n}{2(n+1)}d$,  $g(r)$ achieves the maximal value $(1-\frac{1}{n+1})^{n}\cdot\frac{1}{2^{n+1}(n+1)}d$.
Hence we get $$f(p)>(1-\frac{1}{n+1})^{n}\cdot\frac{1}{2^{n+1}(n+1)}d(M)V(M).$$

\section{noncompact analogues of theorem 1.1}
In this section we consider the noncompact version of theorem 1.1. Let $M$ be a complete noncompact Riemannian manifold of dimension $n$. For a point $p\in M$ and  $d>0$, we write $$f(p,d)=\int_{B_{p}(d)}d(p,x)dv$$ Then we have

\begin{theorem}
 If $M$ is a Cartan-Hadamard manifold, then $$f(p,d)> \frac{n}{n+1}\cdot\frac{1}{\sqrt[n]{n+1}}dV_{p}(d),$$
for any $p\in M$ and all $d>0$.
\end{theorem}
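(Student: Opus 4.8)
The plan is to reduce the estimate to a one-variable optimization, paralleling the proof of Theorem 1.1 in Section 3, but replacing Bishop--Gromov by the volume comparison valid under a sectional curvature \emph{upper} bound. First I would rewrite both quantities radially. Since $M$ is Cartan--Hadamard, $\exp_p$ is a diffeomorphism, so $B_p(d)$ is a genuine geodesic ball with no cut-locus issues, and polar coordinates are global. Writing $A(\lambda):=V(S_\lambda)$ for the induced volume of the geodesic sphere of radius $\lambda$ about $p$, the coarea formula (Fubini), exactly as in the proof of Proposition 2.5, gives
$$f(p,d)=\int_{0}^{d}\lambda A(\lambda)\,d\lambda,\qquad V_{p}(d)=\int_{0}^{d}A(\lambda)\,d\lambda.$$

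The key geometric input is the comparison in the direction opposite to the one used in Section 3. Because $M$ has nonpositive sectional curvature, the Jacobian of $\exp_p$ dominates the Euclidean one, and by the Rauch/G\"unther comparison the ratio $A(\lambda)/\lambda^{n-1}$ is nondecreasing. A short computation then shows $V_p(r)/r^{n}$ is nondecreasing as well: indeed $nV_p(r)=n\int_0^r A(\lambda)\,d\lambda\le \frac{A(r)}{r^{n-1}}\int_0^r n\lambda^{n-1}\,d\lambda = rA(r)$, so $\frac{d}{dr}\big(V_p(r)/r^{n}\big)=\big(rA(r)-nV_p(r)\big)/r^{n+1}\ge 0$. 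Consequently, for all $0<s\le d$,
$$V_{p}(s)\le\Big(\frac{s}{d}\Big)^{n}V_{p}(d).$$
This is the precise counterpart of the inequality $V_p(r)\ge (r/d)^{n}V(M)$ from Section 3, with the monotonicity reversed: nonpositive curvature forces balls to be large, so small balls carry a small fraction of the volume.

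With this in hand I would split the integral at a radius $s\in(0,d)$ to be chosen, discard the inner part, and bound the outer part below by $s$ times the spherical area:
$$f(p,d)=\int_{0}^{s}\lambda A(\lambda)\,d\lambda+\int_{s}^{d}\lambda A(\lambda)\,d\lambda>\int_{s}^{d}\lambda A(\lambda)\,d\lambda\ge s\int_{s}^{d}A(\lambda)\,d\lambda=s\big(V_{p}(d)-V_{p}(s)\big).$$
Inserting the comparison bound yields $f(p,d)>s\,V_p(d)\big(1-(s/d)^{n}\big)$. Substituting $s=td$ reduces everything to maximizing $h(t)=t(1-t^{n})=t-t^{n+1}$ on $(0,1)$; the unique critical point $t=(n+1)^{-1/n}$ gives $h(t)=\frac{n}{n+1}(n+1)^{-1/n}$, which is exactly the asserted constant $\frac{n}{n+1}\cdot\frac{1}{\sqrt[n]{n+1}}$. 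Strictness comes for free from the positive term $\int_0^s\lambda A(\lambda)\,d\lambda$ that was dropped.

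The only real obstacle is pinning down the correct \emph{direction} of the volume comparison and its monotonicity form, since it runs opposite to the compact case: I must be sure that $\sec\le 0$ delivers $V_p(r)/r^{n}$ nondecreasing, equivalently $V_p(s)\le(s/d)^{n}V_p(d)$, rather than the Bishop--Gromov inequality. Once that monotonicity is justified via the Jacobian of the exponential map, the remainder is the same elementary split-and-optimize argument as in the proof of Theorem 1.1.
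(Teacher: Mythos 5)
Your proposal is correct and follows essentially the same route as the paper: split $B_p(d)$ at an intermediate radius, discard the inner integral, bound the outer one below by $s\bigl(V_p(d)-V_p(s)\bigr)$, apply the comparison $V_p(s)\le (s/d)^n V_p(d)$ valid under nonpositive curvature with no cut locus, and maximize $t(1-t^n)$ at $t=(n+1)^{-1/n}$. The only difference is presentational: you derive the volume comparison from the monotonicity of $A(\lambda)/\lambda^{n-1}$ via Rauch/G\"unther, whereas the paper simply cites the Bishop--Gromov-type inequality for Cartan--Hadamard manifolds from Gallot--Hulin--Lafontaine.
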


Note that $\frac{n}{n+1}\cdot\frac{1}{\sqrt[n]{n+1}}$ tends to 1 as $n$ goes to $+\infty$. On the other hand, we always have $f(p,d)<dV_{p}(d)$. So
theorem 4.1 is more or less surprise.

\begin{proof}

Since the sectional curvature of $M$ is nonpositive and $M$ has no cut point. By the Bishop-Gromov's volume comparison (c.f. \cite{[GHL]}  page 169), one has
$$\frac{V_{p}(r)}{V_{o}(r)}\leq \frac{V_{p}(d)}{V_{o}(d)}$$
for $r\leq d$. Hence $$V_{p}(r) \leq \frac{V_{o}(r)}{V_{o}(d)}V_{p}(d)=\frac{r^{n}}{d^{n}}V_{p}(d).$$

We estimate the lower bound of $f$.
\begin{eqnarray*}
f(p,d)& = & \int_{B_{p}(r)}d(p,x)dv+\int_{B_{p}(d)\setminus B_{p}(r)}d(p,x)dv\\
& > & \int_{B_{p}(d)\setminus B_{p}(r)}d(p,x)dv\\
& > & r V(B_{p}(d)\setminus B_{p}(r))=r(V_{p}(d)-V_{p}(r))\\
& \geq & V_{p}(d)r(1-\frac{r^{n}}{d^{n}}).
\end{eqnarray*}
Let $g(r)=r(1-\frac{r^{n}}{d^{n}})$, $0\leq r \leq d$. When $$g^{'}(r)=1-(n+1)\frac{r^{n}}{d^{n}}=0, $$ $r=\frac{d}{\sqrt[n]{n+1}}$, $g(r)$ achieves the maximal value $\frac{n}{n+1}\cdot\frac{1}{\sqrt[n]{n+1}}\cdot d$. So we have $$f(p,d)> \frac{n}{n+1}\cdot\frac{1}{\sqrt[n]{n+1}}dV_{p}(d).$$

\end{proof}

\begin{theorem}
If the Ricci curvature of $M$ is nonnegative, then $$f(p,d)> c(n)dV_{p}(d),$$
for any $p\in M$ and all $d>0$. The concrete value of $c(n)$ is given in the following proof.
\end{theorem}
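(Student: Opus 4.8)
The plan is to adapt the argument of Theorem 1.1 (Section 3) rather than that of Theorem 4.1. The reason is that under $\mathrm{Ric}_{M}\geq 0$ the Bishop--Gromov comparison gives a \emph{lower} bound $V_{p}(r)\geq (r/d)^{n}V_{p}(d)$ for $r\leq d$, exactly the inequality already derived in Section 3. This is the wrong direction to control $f(p,d)$ either through the identity $f(p,d)=dV_{p}(d)-\int_{0}^{d}V_{p}(\lambda)\,d\lambda$ or through a naive annulus estimate $f(p,d)\geq s\,(V_{p}(d)-V_{p}(s))$: both would require an \emph{upper} bound on $V_{p}$ expressed in terms of $V_{p}(d)$, which monotonicity does not supply. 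Instead I would transplant the Bishop--Gromov inequality to a second, far-away center, just as the far point $q$ was used in the compact case.

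First I would produce that far point. Since $M$ is complete and noncompact, Hopf--Rinow forces $x\mapsto d(p,x)$ to be unbounded (otherwise a closed metric ball would be compact and exhaust $M$, making $M$ compact); being continuous and vanishing at $p$, its image is all of $[0,\infty)$. Hence for each $r\in(0,d/2)$ there is a point $q$ with $d(p,q)=d-r$. By the triangle inequality $B_{q}(r)\subseteq B_{p}(d)$, while every $x\in B_{q}(r)$ satisfies $d(p,x)\geq d(p,q)-d(q,x)>d-2r$; moreover $B_{p}(d)\subseteq B_{q}(2d-r)$, so $V_{q}(2d-r)\geq V_{p}(d)$.

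Next I would run Bishop--Gromov centered at $q$: since $\mathrm{Ric}_{M}\geq 0$, the ratio $V_{q}(r)/r^{n}$ is nonincreasing, so $V_{q}(r)/r^{n}\geq V_{q}(2d-r)/(2d-r)^{n}$ for $r\leq d$, whence $V_{q}(r)\geq \frac{r^{n}}{(2d-r)^{n}}V_{p}(d)$. Restricting the integral defining $f(p,d)$ to $B_{q}(r)$ and using $d(p,\cdot)>d-2r$ there gives
\[
f(p,d)>(d-2r)\,V_{q}(r)\geq (d-2r)\frac{r^{n}}{(2d-r)^{n}}\,V_{p}(d).
\]

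Finally I would optimize the prefactor. Writing $r=td$ with $t\in(0,1/2)$, the bound becomes $h(t)\,d\,V_{p}(d)$ with $h(t)=(1-2t)\,t^{n}/(2-t)^{n}$; a logarithmic derivative reduces $h'(t)=0$ to the quadratic $t^{2}-2(n+1)t+n=0$, whose relevant root $t^{\ast}=(n+1)-\sqrt{n^{2}+n+1}$ lies in $(0,1/2)$, and then $c(n):=h(t^{\ast})=(1-2t^{\ast})(t^{\ast})^{n}/(2-t^{\ast})^{n}$. I expect the only computational work to be this last step, and even there the critical point is explicit; the one genuinely new ingredient compared with Section 3 is the construction of the far point $q$, which in the compact case was handed to us by the diameter and the lemma, but here must be extracted from completeness together with noncompactness.
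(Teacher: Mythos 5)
Your proof is correct and follows essentially the same route as the paper's: a far point $q$ with $d(p,q)=d-r$, the inclusions $B_{q}(r)\subseteq B_{p}(d)$ (on which $d(p,\cdot)>d-2r$) and $B_{p}(d)\subseteq B_{q}(2d-r)$, Bishop--Gromov centered at $q$, and optimization of $(1-2t)t^{n}/(2-t)^{n}$ leading to the same quadratic and the same critical point $t^{\ast}=(n+1)-\sqrt{n^{2}+n+1}$. Your unsimplified constant $(1-2t^{\ast})(t^{\ast})^{n}/(2-t^{\ast})^{n}$ agrees with the paper's closed form, and your argument for the existence of $q$ (unboundedness of the distance function on a complete noncompact manifold) supplies a detail the paper leaves implicit.
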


The constant $c(n)$ is different to the one in Theorem 1.1.

\begin{proof}
 Let $0\leq t\leq\frac{d}{2}$. $q$ is a point satisfying $d(p,q)=d-t$. Then
\begin{eqnarray*}
f(p,d)& = & \int_{B_{p}(d-2t)}d(p,x)dv+\int_{B_{p}(d)\setminus B_{p}(d-2t)}d(p,x)dv\\
& > & \int_{B_{p}(d)\setminus B_{p}(d-2t)}d(p,x)dv\\
& > & (d-2t) V(B_{p}(d)\setminus B_{p}(d-2t))\\
& > & (d-2t) V_{q}(t)\\
& \geq & (d-2t)\frac{t^{n}}{(2d-t)^{n}}V_{q}(2d-t)\\
& > & (d-2t)\frac{t^{n}}{(2d-t)^{n}}V_{p}(d).
\end{eqnarray*}
Since $B_{q}(t)\subset B_{p}(d)\setminus B_{p}(d-2t)$, the third ``$>$'' holds. The last ``$>$'' follows from $B_{q}(2d-t)\supset B_{p}(d)$. Let $g(t)=(d-2t)\frac{t^{n}}{(2d-t)^{n}},0\leq t\leq\frac{d}{2}$. When
$$g^{'}(t)=-2(\frac{t}{2d-t})^{n}+(d-2t)n(\frac{t}{2d-t})^{n-1}\frac{2d}{(2d-t)^{2}}=0,$$
namely, $$t^{2}-2d(n+1)t+nd^{2}=0,$$
$t=(n+1-\sqrt{n^{2}+n+1})d$. $g(t)$ achieves the maximal value $\frac{3}{2\sqrt{n^{2}+n+1}+2n+1}(\frac{n}{n+2+2\sqrt{n^{2}+n+1}})^{n}d$. Choose
$$c(n)=\frac{3}{2\sqrt{n^{2}+n+1}+2n+1}(\frac{n}{n+2+2\sqrt{n^{2}+n+1}})^{n}.$$ We obtain $$f(p,d)\geq c(n)dV_{p}(d).$$

\end{proof}

Recall a well-known theorem of Calabi and Yau \cite{[Y]}: Let $M$ be a complete noncompact Riemannian manifold with nonnegative Ricci curvature. For any $p\in M$, we have $V_{p}(r)\geq c(n, p)r$. Consequently $M$ has infinite volume. Combining this result with Theorem 4.2, we obtain

\begin{corollary}
Let $M$ be a complete noncompact Riemannian manifold with nonnegative Ricci curvature. Then
$$\lim_{r\rightarrow \infty}\int_{B_{p}(r)}\frac{d(p,x)}{r}dv=+\infty,$$
for all $p\in M$.
\end{corollary}

\bibliographystyle{amsplain}

\end{document}